\newcommand{\R}{\mathbb{R}}
\newcommand{\Rn}{\mathbb{R}^n}
\newcommand{\W}{\mathcal{W}}
\def\diam{\qopname\relax o{diam}}
\def\dist{\qopname\relax o{dist}}
\def\b{\qopname\relax o{b}}
\newcommand{\vint}{\mathop{\hbox{\vrule height3pt depth-2.7pt width.65em}\hskip-1em \int\hskip-0.4em}\nolimits}
\theoremstyle{plain}
\newtheorem{theorem}[equation]{Theorem}
\newtheorem{lemma}[equation]{Lemma}
\newtheorem{cor}[equation]{Corollary}
\theoremstyle{definition}
\newtheorem{definition}[equation]{Definition}
\theoremstyle{remark}
\numberwithin{equation}{section}
\title{Fractional Hardy--type inequalities in domains with plump complement}
\author{David E. Edmunds}
\address{(D. E. E.) Department of Mathematics, Pevensey II Building, University of Sussex, Falmer, Brighton BN1 9QH, U.K.}
\email{davideedmunds@aol.com}
\author{Ritva Hurri-Syrj\"anen}
\address{(R. H.-S.) University of Helsinki,
Department of Mathematics and Statistics,
Gustaf H\"allstr\"omin katu 2 $\b$ ,
FI-00014 University of Helsinki, Finland.}
\email{ritva.hurri-syrjanen@helsinki.fi}
\author{Antti V. V\"ah\"akangas}
\address{(A. V. V.) University of Helsinki,
Department of Mathematics and Statistics,
Gustaf H\"allstr\"omin katu 2 $\b$ ,
FI-00014 University of Helsinki, Finland.}
\email{antti.vahakangas@helsinki.fi}
\thanks{A. V. V. was supported by the Academy of Finland, grants 75166001 and 1134757,
  and by the Finnish Academy of Science and Letters, Vilho, Yrj\"o and
  Kalle V\"ais\"al\"a Foundation}
\date{\today}
\begin{document}

\keywords{fractional Hardy-type inequality,
domain with plump complement,  
Lipschitz domain, $C^\infty$ domain}
\subjclass[2010]{46E35 (26D10)}

\begin{abstract}
We establish fractional Hardy-type inequalities in a bounded domain
with plump complement. In particular our results apply in bounded $C^{\infty}$ domains and
Lipschitz domains.
\end{abstract}

\maketitle
\markboth{\textsc{Fractional Hardy-type inequalities}}
{\textsc{D. E. Edmunds, R. Hurri-Syrj\"anen, and A. V. V\"ah\"akangas}}

\section{Introduction}
Let $\Omega$  be a proper subdomain in $\Rn$, $n\geq 2$.
Let $s\in (0,1)$ and let $p,q\in (1,\infty )$ be given such that
$0<1/p-1/q <s/n$. We investigate the inequality
\begin{equation}\label{fractional_hardy-type}
\int_{\Omega}\frac{\vert u(x)\vert^q}{\dist (x,\partial \Omega)^{q(s+n(1/q-1/p))} }\,dx
\le
c
\biggl(
\int_{\R^n}\int_{\R^n}\frac{\vert u(x)-u(y)\vert^p}{\vert x-y\vert ^{ps+n} }\,dy\,dx
\biggr)^{q/p}
\end{equation}
for every
$u\in W^{s,p}(\R^n)$ with $\mathrm{spt}\,u\subset \overline\Omega$; here the finite constant
$c$ depends only on $s,n,p,q,\Omega$.
Our work was motivated by
the following fractional order inequality
\begin{equation}\label{fractional_order_hardy}
\int_{\Omega}\frac{\vert u(x)\vert^p}{\dist (x,\partial \Omega)^{ps} }\,dx 
\le c
\int_{\Omega}\int_{\Omega}\frac{\vert u(x)-u(y)\vert^p}{\vert x-y\vert ^{ps+n} }\,dy\,dx
\end{equation}
for all $u\in C_{0}(\Omega )$ with a finite constant $c$ which depends only on $s$, $n$, $p$, and $\Omega$.
B. Dyda proved that
inequality \eqref{fractional_order_hardy} holds in $\Omega$ with $p>0$,
if one of the following conditions is valid:
\begin{enumerate}
\item if $\Omega$ is a bounded Lipschitz domain and $sp>1$,  
\item if $\Omega$ is a complement of a bounded Lipschitz domain and $sp\in (0,\infty)\backslash\{1,n\}$, 
\item if
 $\Omega$ is a complement of a point and $sp\in (0,\infty)\backslash\{n\}$, 
\item if
$\Omega$ is a domain above the graph of a Lipschitz function 
$\R^{n-1}\to\R$ and
$sp\in (0,\infty)\backslash\{1\}$,\\
\end{enumerate}
\cite[Theorem 1.1]{Dyda2004}.
He showed also that inequality \eqref{fractional_order_hardy} is false if
$\Omega$ is a bounded Lipschitz domain with $sp \le 1$ and $s < 1$. 
Inequality
\eqref{fractional_order_hardy} was proved for convex domains when
$1<p<\infty$ and $1/p<s <1$ by M. Loss and C. A. Sloane, \cite[Theorem 1.2]{LossSloane2010}. 
Inequality \eqref{fractional_order_hardy} holds in a half-space
whenever
$0<s<1$, $sp \neq 1$, $1\le p<\infty$, by R. L. Frank and R. Seiringer
\cite[Theorem 1.1]{FrankSeiringer}; 
the $p=2$-case was considered in \cite[Theorem 1.1]{BogdanDyda}. 

We prove fractional Hardy-type inequalities \eqref{fractional_hardy-type} in a bounded domain whose complement is plump in the sense of 
the following definition.
The open 
and closed $n$-dimensional Euclidean balls, centered at a point  $x$ and with radius $r>0$, are denoted by $B^n(x,r)$ and $\overline{B^n(x,r)}$,
respectively.

\begin{definition}\label{plump}
Let $n\ge 2$ and $\eta\ge 1$.
A set $A$ in $\R^n$
is $\eta$-{\em plump}  if for all $x\in \bar{A}$
and all $r\in (0,\diam(A))$ there is a point $z$ in $\overline{{B}^n(x,r)}$ with
$B^n(z,r/\eta)\subset A$. 
\end{definition}


The following is our main theorem.

\begin{theorem}\label{fractional_hardy_plump} 
Suppose that $\Omega $ is a bounded domain in $\Rn$, $n\geq 2$, with 
an $\eta$-plump complement $\R^n\setminus \Omega$, $\eta\ge 1$.
Let $s  \in (0,1)$ and 
let $p,q\in (1,\infty)$. If
$0<1/p-1/q<s /n$,
then
\begin{align*}
&\biggl(
\int_{\Omega}\frac{\vert u(x)\vert^q}{\dist (x,\partial \Omega)^{q(s+n(1/q-1/p))} }\,dx
\biggr)^{1/q}\\
&\le
c_{s,n,p,q}\eta^{2n/q+s-n/p}
\biggl(
\int_{\R^n}\int_{\R^n}\frac{\vert u(x)-u(y)\vert^p}{\vert x-y\vert ^{sp+n} }\,dy\,dx
\biggr)^{1/p}
\end{align*}
for every
$u\in W^{s,p}(\R^n)$ with $\mathrm{spt}\,u\subset \overline\Omega$.
\end{theorem}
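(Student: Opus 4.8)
The plan is to reduce the global inequality to a local estimate on balls, in which the \emph{only} use of the hypothesis is the plumpness of $\R^n\setminus\Omega$, and then to sum. Throughout write $d(x)=\dist(x,\p\Omega)$ and $\alpha=s+n(1/q-1/p)$, so that $0<\alpha<s$ by the assumption $0<1/p-1/q<s/n$, and abbreviate $g(x)=\bigl(\int_{\R^n}|u(x)-u(y)|^p|x-y|^{-sp-n}\,dy\bigr)^{1/p}$, so that $\int_{\R^n}g^p$ equals the $p$-th power of the right-hand seminorm. The geometric input is this: given $x\in\Omega$, pick a nearest boundary point $x_0$ and apply Definition \ref{plump} to $\R^n\setminus\Omega$ at $x_0$ with radius $r=d(x)$ to obtain a ball $Q_x=B^n(z,d(x)/\eta)\subset\R^n\setminus\Omega$ with $|x-z|\le 2d(x)$. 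Since $\p\Omega$ has empty interior, $Q_x$ lies in the \emph{open} set $\R^n\setminus\overline\Omega$, whence $u\equiv 0$ on $Q_x$; moreover $Q_x\subset B^n(x,3d(x))$ and $|Q_x|=c_n(d(x)/\eta)^n$. Thus near every interior point, at the scale $d(x)$, there is a ball of radius $d(x)/\eta$ on which $u$ vanishes.

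From this I would first extract a pointwise bound. Using $u_{Q_x}=0$, H\"older's inequality, and $|x-y|\le 3d(x)$ for $y\in Q_x$,
\[
|u(x)|\le\frac{1}{|Q_x|}\int_{Q_x}|u(x)-u(y)|\,dy
\le|Q_x|^{-1/p}\bigl(3d(x)\bigr)^{s+n/p}g(x)
=C_{n,p,s}\,\eta^{n/p}\,d(x)^{s}\,g(x).
\]
Raising this to the power $p$ and integrating gives the diagonal ($q=p$) inequality with an $\eta$-power $n/p$, consistent with (indeed sharper than) the claimed exponent at $q=p$. For $q>p$, however, the factor $g\in L^p$ cannot be upgraded to $L^q$ pointwise, so the crux is a genuinely local smoothing estimate. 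On $B=B^n(x,3d(x))$ I would again use $u_{Q_x}=0$ to write $\int_B|u|^q=\int_B|u-u_{Q_x}|^q$ and invoke the fractional Sobolev--Poincar\'e inequality relative to the subball average over $Q_x$: its scaling produces the factor $r^{\alpha}$ with $r\simeq d(x)$, which exactly matches the weight $d^{-\alpha}$, while its dependence on the size ratio $|B|/|Q_x|\simeq\eta^{n}$ produces the power of $\eta$. The role of the strict constraint $1/p-1/q<s/n$ is precisely to make the $L^q$-gain over $L^p$ available on balls, uniformly whether $sp<n$, $sp=n$, or $sp>n$.

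To globalize I would take a Whitney (or dyadic-in-$d$) decomposition $\{B_i\}$ of $\Omega$ with $\diam(B_i)\simeq d(x)\simeq\dist(B_i,\p\Omega)$ on $B_i$, and with the enlarged balls $B_i'$ of finite overlap independent of $\eta$. Summing the local estimates yields
\[
\int_{\Omega}\frac{|u(x)|^q}{d(x)^{q\alpha}}\,dx
\le C\,\eta^{\,\text{(size-ratio power)}}\sum_i\Bigl(\int_{B_i'}\int_{B_i'}\frac{|u(y)-u(w)|^p}{|y-w|^{sp+n}}\,dw\,dy\Bigr)^{q/p},
\]
and then $\ell^p\hookrightarrow\ell^q$ (valid since $q\ge p$) together with the finite overlap of the $B_i'$ replaces the right-hand side by a constant multiple of the $q/p$-th power of the global seminorm. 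Collecting the dimensional constants and the $\eta$-factors then gives the asserted inequality.

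The step I expect to be the main obstacle is the \emph{sharp} dependence on the size ratio in the local Sobolev--Poincar\'e estimate relative to the vanishing subball: it is this dependence, together with the scaling exponent $s-n/p$, that must be arranged to output exactly $\eta^{2n/q+s-n/p}$ rather than the cruder $\eta^{n}$ that a direct triangle-inequality comparison of $u_B$ with $u_{Q_x}$ produces. A secondary delicate point is to carry out the $L^q$-gain uniformly across all regimes of $sp$ relative to $n$, in particular when $sp\ge n$ and the Gagliardo--Nirenberg--Sobolev exponent $p^\ast$ is infinite, so that interpolating the diagonal Hardy inequality against the global Sobolev embedding is unavailable and one must argue entirely through the local, finite-scale estimate above.
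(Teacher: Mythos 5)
Your plan is correct in substance, but it follows a genuinely different route from the paper's, and the comparison is instructive. The paper also uses a Whitney decomposition and a fractional Sobolev--Poincar\'e inequality on cubes (its Lemma \ref{hardy_cube}, which holds whenever $0\le 1/p-1/q<s/n$, uniformly across the regimes $sp<n$, $sp=n$, $sp>n$ --- so your second flagged worry dissolves), but it places the vanishing set differently: for each Whitney cube $Q$ it produces, via plumpness, a cube $Q^s\subset\R^n\setminus\overline\Omega$ of the \emph{same} size as $Q$, which is then necessarily at distance up to $15\eta\diam(Q)$; the ambient cube $Q^*=\kappa Q$ must be dilated by $\kappa\simeq\eta\sqrt n$, the family $\{Q^*\}$ no longer has overlap bounded independently of $\eta$, and the summation is therefore done by a Carleson-type embedding (Lemma \ref{carleson_lemma}) resting on Hedberg's theorem for Riesz potentials on $\R^{2n}$, which in turn forces the detour through the auxiliary exponent $r<p$ and smoothness $\mu<s$. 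Your construction instead takes a vanishing ball of \emph{small} radius $d(x)/\eta$ at distance at most $2d(x)$ (your justification that it avoids $\overline\Omega$ is right; the cleanest phrasing is that an open ball contained in $\R^n\setminus\Omega$ lies in its interior, which is $\R^n\setminus\overline\Omega$), so your enlarged balls are bounded dilates of the Whitney pieces, their overlap is independent of $\eta$, and the summation reduces to $\sum_i a_i^{q/p}\le\bigl(\sum_i a_i\bigr)^{q/p}$ together with the bounded overlap of $B_i'\times B_i'$ in $\R^{2n}$ --- entirely elementary. One correction: the step you single out as the main obstacle is not one. The crude comparison $|B|\,|u_{B}-u_{Q_x}|^q\le(|B|/|Q_x|)\int_B|u-u_B|^q\lesssim\eta^{n}\int_B|u-u_B|^q$ costs $\eta^{n}$ \emph{inside} the $q$-th power, hence only $\eta^{n/q}$ in the final constant; since $2n/q+s-n/p=n/q+(s+n/q-n/p)$ with $s+n/q-n/p>0$ and $\eta\ge1$, this is already \emph{better} than the constant asserted in the theorem, so no sharper dependence on the size ratio is needed. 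In short, the paper's machinery buys a reusable Carleson embedding lemma and a reflection to a full-size cube, while your route, once the local Sobolev--Poincar\'e inequality is quoted in the form of Lemma \ref{hardy_cube} with exponents $(p,q)$ and smoothness $s$, gives a shorter, more elementary proof with a smaller power of $\eta$.
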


Examples of bounded domains with $\eta$-plump complement
include Lipschitz domains and convex domains. 
More examples are obtained by using
$K$-quasiconformal mappings $f:\R^n\to \R^n$: if $\Omega$  in $\R^n$ is 
a bounded domain
with an $\eta$-plump complement, then the image $f\Omega$ 
is also  bounded and has a $\mu$-plump complement, 
where $\mu$ depends on $n,K$ and $\eta$ only,
see e.g. \cite[Theorem 6.6]{V}.


We give applications of Theorem \ref{fractional_hardy_plump} in Section \ref{applications}.

\section{Notation and auxiliary results}

The Lebesgue measure of a measurable
set $E$ in $\R^n$ is written as $\lvert E\rvert$.
For a measurable set $E$, with a finite and positive measure, we write
\[
\vint_E f(x)\,dx=\frac{1}{\lvert E \rvert} \int_{E}f(x)\,dx\,.
\]
We write $\chi_E$ for the characteristic function of a set $E$.

Let $\Omega$ be a bounded domain in $\R^n$, $n\ge 2$, and
let $\mathcal{W}$ be its Whitney decomposition. 
For the properties of Whitney cubes $Q\in\mathcal{W}$ we refer to E. M. Stein's book,
\cite{S}. In particular, we need the inequalities
\begin{equation}\label{dist_est}
\diam(Q)\le \dist(Q,\partial \Omega)\le 4 \diam(Q)\,,\quad Q\in \mathcal{W}.
\end{equation}
We let $Q\in\mathcal{W}$ be a cube
with center $x_Q$ and side length $\ell(Q)$.  
By $tQ$, $t>0$, we mean
a cube  with sides parallel to those of $Q$ that is centered 
at $x_Q$ and whose
side length is $t\ell(Q)$.

We recall definition of the {\em fractional order Sobolev spaces} in a
domain $\Omega$ in $\R^n$.
For $1\le p<\infty$ and $s\in (0,1)$ we let
$W^{s,p}(\Omega)$ be the collection of all functions $f$ in
$L^p(\Omega)$ with
$||f||_{W^{s,p}(\Omega)}:=||f||_{L^p(\Omega)}+|f|_{W^{s,p}(\Omega)}<\infty$,
where
\[
|f|_{W^{s,p}(\Omega)} := \bigg(\int_\Omega \int_\Omega
\frac{|u(x)-u(y)|^p}{|x-y|^{sp+n}}\,dx\,dy\bigg)^{1/p}.
\]
The support of a function $f:\R^n\to \mathbb{C}$ is denoted by
$\mathrm{spt}\,f$, and it is the closure of the set  $\{x\,:\,f(x)\not=0\}$ in $\R^n$.

The notation $a\lesssim b$ mean that an inequality $a\le cb$ holds for some constant $c>0$
whose exact value is not important. 
We use subscripts to indicate the dependence on parameters, for example,
a quantity $c_{d}$ depends on a parameter $d$.

We state  fractional Sobolev--Poincar\'e inequalities
for a cube.

\begin{lemma}\label{hardy_cube}
Let $Q$ be a cube in $\R^n$, $n\ge 2$.
Suppose that $p,q\in [1,\infty)$, and $s \in (0,1)$ satisfy
$0\le 1/p-1/q<s /n$.
Then, for every $u\in L^p(Q)$,
\[
\frac{1}{|Q|}\int_{Q} |u(x)-u_Q|^q\,dx \le c|Q|^{qs  /n-q/p} \bigg(\int_{Q}\int_{Q} 
\frac{|u(x)-u(y)|^p}{|x-y|^{n+s  p}}\,dy\,dx\bigg)^{q/p}.
\]
Here the constant $c>0$ is independent of $Q$ and $u$.
\end{lemma}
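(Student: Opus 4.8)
The plan is to work directly on $Q$, without normalising to a unit cube: the geometric decay across dyadic scales will reproduce the factor $|Q|^{qs/n-q/p}$ by itself. Write $r_0=\diam(Q)$ and fix a Lebesgue point $x\in Q$. For $k\ge0$ set $B_k=B^n(x,2^{-k}r_0)\cap Q$, so that $B_0=Q$, the sets $B_k$ shrink to $x$, and $|B_k|\approx 2^{-kn}r_0^n$ since a cube is plump. First I would record the elementary local estimate
\[
\vint_{B}|u-u_B|\,dy\lesssim r^{\,s-n/p}\Bigl(\int_B\int_B\frac{|u(y)-u(z)|^p}{|y-z|^{n+sp}}\,dy\,dz\Bigr)^{1/p},
\]
valid for every convex set $B$ of radius $r$; it comes from Jensen's inequality together with one application of H\"older's inequality in the pair $(y,z)$, in which the factor $|y-z|^{(n+sp)/p}$ is bounded by $(2r)^{(n+sp)/p}$.

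Set $g(y)^p=\int_Q|u(y)-u(z)|^p|y-z|^{-n-sp}\,dz$, extended by zero outside $Q$, so that $\|g\|_{L^p(Q)}$ equals the seminorm on the right-hand side. Since $u_{B_k}\to u(x)$ and $|B_k|\approx|B_{k+1}|$, telescoping the differences $u_{B_k}-u_{B_{k+1}}$ and inserting the local estimate yields the pointwise bound
\[
|u(x)-u_Q|\lesssim\sum_{k\ge0}(2^{-k}r_0)^{\,s-n/p}\,\|g\|_{L^p(B^n(x,2^{-k}r_0))}.
\]

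The decisive step is to take the $L^q(Q)$ norm and sum. By the triangle inequality in $L^q$ it is enough to estimate each term. Here I would note that $x\mapsto\int_{B^n(x,r_k)}g^p$ is the convolution $g^p*\chi_{B^n(0,r_k)}$, so Young's inequality with exponents $1$ and $q/p$ gives
\[
\bigl\|\,\|g\|_{L^p(B^n(\cdot,r_k))}\bigr\|_{L^q(Q)}\lesssim\|g\|_{L^p(Q)}\,r_k^{\,n/q};
\]
the hypothesis $1/p-1/q\ge0$ is precisely what guarantees $q/p\ge1$, so Young's inequality applies. After summation the exponents combine into $\sum_k(2^{-k}r_0)^{\beta}$ with $\beta=s-n(1/p-1/q)$, and the hypothesis $1/p-1/q<s/n$ forces $\beta>0$; the geometric series therefore converges and contributes $r_0^{\beta}\approx|Q|^{s/n-1/p+1/q}$. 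This gives $\|u-u_Q\|_{L^q(Q)}\lesssim|Q|^{s/n-1/p+1/q}\|g\|_{L^p(Q)}$, which is the assertion after raising to the power $q$ and dividing by $|Q|$.

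The main obstacle is genuinely the passage from the exponent $p$ on the right to the larger exponent $q$ on the left, i.e. the Sobolev gain: a single H\"older estimate only reproduces the case $q=p$ and discards the improvement, which must instead be harvested from the singular kernel across all dyadic scales. The chaining-plus-Young mechanism above is the clean way to do this, and it is satisfying that the two arithmetic conditions on $1/p-1/q$ play complementary roles, namely $q/p\ge1$ for Young's inequality and $\beta>0$ for summability. A secondary technical point is the local estimate on the truncated sets $B^n(x,r)\cap Q$, where the convexity of $Q$ and the measure bound $|B^n(x,r)\cap Q|\gtrsim r^n$ are used.
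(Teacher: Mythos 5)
Your argument is correct, and it takes a genuinely different route from the paper: the paper disposes of this lemma in two lines by quoting the unit-cube case from \cite[Remark 4.14]{H-SV} and then rescaling, the change of variables $x\mapsto x_Q+\ell(Q)x$ being exactly what produces the factor $|Q|^{qs/n-q/p}$. You instead give a self-contained proof by chaining: a pointwise bound $|u(x)-u_Q|\lesssim\sum_k(2^{-k}r_0)^{s-n/p}\|g\|_{L^p(B^n(x,2^{-k}r_0))}$ at Lebesgue points via telescoping over the truncated balls $B^n(x,2^{-k}r_0)\cap Q$, followed by Minkowski's inequality in $L^q$ and Young's convolution inequality $\|g^p*\chi_{B^n(0,r_k)}\|_{q/p}\le\|g^p\|_1\,\|\chi_{B^n(0,r_k)}\|_{q/p}$ to pass from exponent $p$ to exponent $q$. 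I checked the exponent bookkeeping: the local estimate gives $r^{s-n/p}$, Young contributes $r_k^{n/q}$, the series converges since $\beta=s-n(1/p-1/q)>0$, and $r_0^{\beta q}/|Q|=|Q|^{qs/n-q/p}$, as required; the constant depends only on $n,s,p,q$ through the measure-density constant of a cube and $(1-2^{-\beta})^{-1}$. What your approach buys is transparency — it shows precisely where $q\ge p$ (Young) and $1/p-1/q<s/n$ (summability) enter, and it produces the correct power of $|Q|$ directly without a separate normalisation step; the paper's version is shorter but opaque, outsourcing the substance to the cited reference (whose proof is in the same spirit as yours). Two small points you should make explicit in a final write-up: that $u_{B_k}\to u(x)$ for a.e.\ $x$ even though the first few $B_k$ are truncated balls (for $x$ in the interior the truncation is eventually vacuous, and in any case $|B^n(x,r)\cap Q|\ge c_nr^n$ lets you compare with full balls), and that $B_0=Q$ up to a null set so the telescoping really starts at $u_Q$. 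Neither is a gap.
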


\begin{proof}
The inequality follows from
\cite[Remark 4.14]{H-SV}, when $Q=[-1/2,1/2]^n.$
A change of variables gives the general case.
%
\end{proof}

Let $0<\sigma <d$.
The Riesz potential of a 
function $f$  is given by
\[
I_{\sigma} f(x) =\int_{\R^d} \frac{f(y)}{|x-y|^{d-\sigma}}\,dy.
\]
The following theorem is from \cite[Theorem 1]{He}.

\begin{theorem}\label{hedberg}
Suppose that $0<\sigma<d$ and let $p,q\in (1,\infty)$. If
\[0<1/p-1/q=\sigma/d,\] then
there is a constant $c>0$ such that
inequality $||I_\sigma(f)||_q \le c||f||_{p}$ holds for every
$f\in L^p(\R^d)$.
\end{theorem}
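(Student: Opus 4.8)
\section{Proof proposal}

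This is the classical Hardy--Littlewood--Sobolev inequality, and I would establish it through Hedberg's pointwise estimate combined with the boundedness of the Hardy--Littlewood maximal operator $M$ on $L^p(\R^d)$. The plan is to dominate $|I_\sigma f(x)|$ pointwise by a geometric mean of $Mf(x)$ and $\|f\|_p$, and then integrate. We may assume $f\not\equiv 0$, and it suffices to argue at points $x$ with $0<Mf(x)<\infty$, since the complementary set is $\Leb$-null and $Mf(x)=0$ forces $I_\sigma f(x)=0$. First I would split the defining integral at an arbitrary radius $R>0$:
\[
|I_\sigma f(x)| \le \int_{|x-y|<R}\frac{|f(y)|}{|x-y|^{d-\sigma}}\,dy + \int_{|x-y|\ge R}\frac{|f(y)|}{|x-y|^{d-\sigma}}\,dy =: \mathrm{I}(x)+\mathrm{II}(x).
\]

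For the near part $\mathrm{I}(x)$, I would decompose the punctured ball into dyadic annuli $A_j=\{R2^{-j-1}\le|x-y|<R2^{-j}\}$ for $j\ge 0$. On $A_j$ the kernel is comparable to $(R2^{-j})^{\sigma-d}$, while $\int_{A_j}|f|\,dy\le c\,(R2^{-j})^d\,Mf(x)$ by definition of the maximal function; hence each annulus contributes $\lesssim (R2^{-j})^\sigma Mf(x)$, and summing the geometric series in $2^{-j\sigma}$ (convergent since $\sigma>0$) gives $\mathrm{I}(x)\lesssim R^\sigma Mf(x)$. For the far part $\mathrm{II}(x)$, Hölder's inequality with exponents $p,p'$ yields
\[
\mathrm{II}(x)\le \|f\|_p\Bigl(\int_{|x-y|\ge R}|x-y|^{-(d-\sigma)p'}\,dy\Bigr)^{1/p'}\lesssim \|f\|_p\,R^{\sigma-d/p}.
\]
The radial integral converges precisely because $\sigma<d/p$: the hypothesis $1/p-1/q=\sigma/d$ together with $q<\infty$ forces $\sigma/d=1/p-1/q<1/p$.

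Combining the two bounds and optimizing in $R$ — balancing the two terms at $R^{d/p}\sim \|f\|_p/Mf(x)$ — produces the Hedberg estimate
\[
|I_\sigma f(x)|\lesssim \|f\|_p^{\,p\sigma/d}\,Mf(x)^{1-p\sigma/d}=\|f\|_p^{\,1-p/q}\,Mf(x)^{p/q},
\]
where I used $p\sigma/d=p(1/p-1/q)=1-p/q$. Raising this to the power $q$ and integrating over $\R^d$ gives $\|I_\sigma f\|_q^q\lesssim \|f\|_p^{\,q-p}\|Mf\|_p^p$, and the Hardy--Littlewood maximal theorem bounds $\|Mf\|_p\lesssim\|f\|_p$. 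Taking $q$-th roots yields $\|I_\sigma f\|_q\lesssim\|f\|_p$, as claimed.

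The main structural point — and the only place where the hypothesis $p>1$ is genuinely used — is the final invocation of the maximal theorem. The pointwise Hedberg estimate itself is valid for all $p\ge 1$, but $\|Mf\|_p\lesssim\|f\|_p$ fails at $p=1$, consistent with the known failure of the strong endpoint bound $I_\sigma\colon L^1\to L^{d/(d-\sigma)}$. Thus the two conditions $p>1$ (for the maximal theorem) and $q<\infty$ (ensuring $\sigma<d/p$, so that $\mathrm{II}$ is finite) delimit exactly the range in which the argument closes.
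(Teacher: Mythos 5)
Your proof is correct and complete, including the correct identification of where $p>1$ and $q<\infty$ enter. The paper itself gives no proof of this statement --- it is quoted directly from \cite[Theorem 1]{He} --- and your argument (splitting $I_\sigma f$ at a radius $R$, bounding the near part by $R^\sigma Mf(x)$ via dyadic annuli and the far part by H\"older's inequality, optimizing in $R$ to obtain the pointwise bound $|I_\sigma f(x)|\lesssim \|f\|_p^{1-p/q}\,Mf(x)^{p/q}$, and then invoking the Hardy--Littlewood maximal theorem) is essentially the same as the proof in that cited source, where this pointwise estimate originates.
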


Recall from \cite{A} that
the fractional maximal function of a locally integrable function
$f:\R^{d}\to [-\infty,\infty]$ is
\[
\mathcal{M}_\sigma f(x) = \sup_{r>0} \frac{r^{\sigma}}{|B^d(x,r)|}\int_{B^d(x,r)} |f(y)|\,dy.
\]
If $Q$ is a cube in $\R^d$ and $x\in Q$, then
\begin{equation}\label{cube_est}
 \frac{\ell(Q)^{\sigma}}{|Q|} \int_{Q} |f(y)|\,dy\le c_d\mathcal{M}_\sigma f(x).
\end{equation}
Since $0<\sigma <d$, there is a constant $c_d>0$ such that
\begin{equation}\label{mi_bound}
\mathcal{M}_\sigma f(x) \le c_d I_\sigma |f|(x)
\end{equation}
for every $x\in\R^d$.

\begin{lemma}\label{carleson_lemma}
Let $\Omega$ be a bounded domain in $\R^n$, $n\ge 2$,
and let $\mathcal{W}$ be its
Whitney decomposition.
Suppose that $1<r<p<q<\infty$ and $\kappa\ge 1$. Then
\begin{equation}\label{beta_ineq}
\begin{split}
&\sum_{Q\in\mathcal{W}} |\kappa Q|^{2\beta}\bigg(\vint_{\kappa Q}\vint_{\kappa Q} |g(x,y)|\,dx\,dy\bigg)^t
\\&\qquad\qquad\qquad\le c_{n,r,p,q} \kappa^{n}\bigg(\iint_{\R^n\times\R^n}  |g(x,y)|^s\,dx\,dy\bigg)^{t/s}
\end{split}
\end{equation}
for every $g\in L^s(\R^n\times \R^n)$, where
 $s=p/r$, $t=q/r$ and $\beta =t/s=q/p$.
\end{lemma}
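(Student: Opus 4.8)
The plan is to read each product set $\kappa Q\times\kappa Q$ as a single cube in $\R^{2n}$ and to run a fractional Carleson-type embedding. Writing $\tilde Q=\kappa Q\times\kappa Q$, this is a cube in $\R^{2n}=\R^n\times\R^n$ of side length $\kappa\ell(Q)$ and measure $|\tilde Q|=|\kappa Q|^2$, so that the left-hand side of \eqref{beta_ineq} equals $\sum_{Q\in\W}|\tilde Q|^{\beta}\big(\vint_{\tilde Q}|g|\big)^{t}$. First I would fix, by scaling, the order $\sigma=2nr(1/p-1/q)$. Since $1<r<p<q$ one checks $0<\sigma<2n$, and $1/s-1/t=r(1/p-1/q)=\sigma/(2n)$; hence Theorem \ref{hedberg} applied in dimension $d=2n$ together with \eqref{mi_bound} gives $\|\mathcal{M}_\sigma g\|_{L^{t}(\R^{2n})}\le c\,\|g\|_{L^{s}(\R^{2n})}$, the critical Sobolev bound for the fractional maximal function on the doubled space.

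Next I would pass from the cube averages to the fractional maximal function. For every $\xi\in\tilde Q$, estimate \eqref{cube_est} in $\R^{2n}$ yields $\vint_{\tilde Q}|g|\le c\,(\kappa\ell(Q))^{-\sigma}\mathcal{M}_\sigma g(\xi)$, and the value of $\sigma$ above is chosen precisely so that the accompanying scale weight is scale invariant under this substitution. The delicate point is then to sum over $Q\in\W$ while keeping the sharp power of $\kappa$. Here I would deliberately avoid bounding the pointwise overlap $\sum_{Q}\chi_{\tilde Q}$: near $\partial\Omega$ the dilates of large Whitney cubes reach points arbitrarily close to the boundary, so that overlap is not controlled by $\kappa^{n}$ but carries an unbounded logarithmic factor in the distance to the boundary. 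Instead I would use the Carleson packing condition $\sum_{\tilde Q\subseteq R}|\tilde Q|^{\beta}\le c\,\kappa^{n}|R|^{\beta}$, valid for every cube $R\subset\R^{2n}$. This packing is the crux of the argument: at a fixed dyadic side length the dilated Whitney cubes contained in $R$ have bounded overlap $\lesssim\kappa^{n}$ by \eqref{dist_est} and the separation of same-scale Whitney centres, while summing their contributions over the scales gives a convergent geometric series (the ratio is $<1$ because $\beta>1$), whose total is $c\,\kappa^{n}|R|^{\beta}$ with no logarithmic loss.

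Finally I would combine the pointwise bound and the packing bound through a level-set (layer-cake) decomposition of $\sum_{Q\in\W}|\tilde Q|^{\beta}\big(\vint_{\tilde Q}|g|\big)^{t}$ according to the size of $\vint_{\tilde Q}|g|$: the cubes with large average are packed inside the super-level sets of $\mathcal{M}_\sigma g$, so the Carleson packing condition converts the cube sum into $c\,\kappa^{n}\|\mathcal{M}_\sigma g\|_{L^{t}(\R^{2n})}^{t}$, and the Hedberg estimate of the first step then bounds this by $c\,\kappa^{n}\|g\|_{L^{s}(\R^{2n})}^{t}=c\,\kappa^{n}\big(\iint_{\R^n\times\R^n}|g|^{s}\big)^{t/s}$, which is exactly the right-hand side of \eqref{beta_ineq}. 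I expect the main obstacle to be precisely this sharp dependence on $\kappa$: a crude use of superadditivity ($\sum a_Q^{\beta}\le(\sum a_Q)^{\beta}$) followed by an overlap estimate would produce $\kappa^{n\beta}$, and the naive pointwise-overlap count would produce $\kappa^{n}$ times a logarithm of the distance to $\partial\Omega$, so the clean factor $\kappa^{n}$ can only be recovered from the scale-wise geometric summation encoded in the packing condition.
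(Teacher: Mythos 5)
Your setup---the doubled dimension $d=2n$, the order $\sigma=2nr(1/p-1/q)=2n(\beta-1)/t$, the pointwise bound $\vint_{\tilde Q}\lvert g\rvert\le c\,\ell(\tilde Q)^{-\sigma}\mathcal{M}_\sigma g(\xi)$ from \eqref{cube_est}, and the Hedberg input $\lVert \mathcal{M}_\sigma g\rVert_t\lesssim\lVert I_\sigma\lvert g\rvert\rVert_t\lesssim\lVert g\rVert_s$---coincides with the paper's, and your diagnosis that the raw overlap $\sum_Q\chi_{\kappa Q\times\kappa Q}$ carries a logarithm of the distance to $\partial\Omega$ is correct. The gap is in the final assembly. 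The packing condition you state, $\sum_{\tilde Q\subseteq R}\lvert\tilde Q\rvert^{\beta}\le c\kappa^{n}\lvert R\rvert^{\beta}$, is true but is not the one your layer-cake can use. After the pointwise bound one has $\lvert\tilde Q\rvert^{\beta}\bigl(\vint_{\tilde Q}\lvert g\rvert\bigr)^{t}\le c\,\lvert\tilde Q\rvert\,\bigl(\inf_{\tilde Q}\mathcal{M}_\sigma g\bigr)^{t}$, so stratifying over the level sets $E_\lambda=\{\mathcal{M}_\sigma g>\lambda\}$ requires the \emph{order-one} packing $\sum_{\tilde Q\subseteq E_\lambda}\lvert\tilde Q\rvert\le c\kappa^{n}\lvert E_\lambda\rvert$ over open sets; with $\lvert\tilde Q\rvert^{\beta}$ on the left and $\lvert E_\lambda\rvert^{\beta}$ on the right the $\lambda$-integral produces $\int_0^\infty\lambda^{t-1}\lvert E_\lambda\rvert^{\beta}\,d\lambda$, a Lorentz-type quantity of index $t/\beta=s$ in the first slot, which is not $\lVert\mathcal{M}_\sigma g\rVert_{L^t}^t$ and is not what Theorem \ref{hedberg} controls. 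Likewise, stratifying ``according to the size of $\vint_{\tilde Q}\lvert g\rvert$'' does not place $\tilde Q$ inside a level set of $\mathcal{M}_\sigma g$ at a level determined by that average, because the pointwise bound carries the scale factor $\ell(\tilde Q)^{\sigma}$; the correct stratification variable is $\ell(\tilde Q)^{\sigma}\vint_{\tilde Q}\lvert g\rvert$. Both defects are repairable: your scale-by-scale count gives the order-one packing over cubes just as well (the geometric series converges because $n>0$, with no appeal to $\beta>1$), and it extends from cubes to open sets $E$ by assigning each $\tilde Q\subseteq E$ to the Whitney cube of $E$ containing its centre.

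It is worth seeing how the paper sidesteps the packing and level-set machinery entirely. It writes $\lvert\kappa Q\rvert^{2\beta}\bigl(\vint_{\kappa Q}\vint_{\kappa Q}\lvert g\rvert\bigr)^{t}=\kappa^{n}\int_{\kappa Q\times Q}\bigl[\ell(\kappa Q)^{\sigma}\vint_{\kappa Q}\vint_{\kappa Q}\lvert g\rvert\bigr]^{t}\,dz\,dw$, where the second factor of the product set is the \emph{undilated} Whitney cube $Q$. Since the Whitney cubes themselves are essentially disjoint, $\sum_{Q\in\W}\chi_{\kappa Q\times Q}\le 1$ almost everywhere, so the sum collapses to $\kappa^{n}\int_{\R^{2n}}(\mathcal{M}_\sigma g)^{t}$ with no overlap problem and no logarithm; the clean $\kappa^{n}$ is precisely the ratio $\lvert\kappa Q\times\kappa Q\rvert/\lvert\kappa Q\times Q\rvert$. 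The same identity is also the fastest proof of the order-one packing your argument needs, since $\sum_{\tilde Q\subseteq E}\lvert\tilde Q\rvert=\kappa^{n}\sum\lvert\kappa Q\times Q\rvert\le\kappa^{n}\lvert E\rvert$ for every measurable $E$.
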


\begin{proof}
The
fractional maximal function $\mathcal{M}_\sigma$
and the Riesz potential $I_{\sigma}$ are both associated with
$\R^{d}$. Throughout this proof $d=2n$ and $\sigma = 2n(\beta-1)/t$.

Let us rewrite the left hand side of inequality \eqref{beta_ineq} as
\begin{align*}
LHS=\kappa^{n}&\sum_{Q\in\mathcal{W}} \int_{\R^n}\int_{\R^n} 
\chi_{\kappa Q}(z)\chi_{Q}(w)\\
& \bigg(\ell(\kappa Q)^{2n(\beta-1)/t} \frac{1}{|\kappa Q|}\int_{\kappa Q}\frac{1}{|\kappa Q|}\int_{\kappa Q}|g(x,y)|\,dx\,dy\bigg)^t
\,dz\,dw.
\end{align*}
By \eqref{cube_est} with $(z,w)\in \kappa Q\times Q\subset 
 \kappa Q\times \kappa Q\subset \R^d$ and  by \eqref{mi_bound}
\begin{align*}
\kappa^{-n}LHS&\lesssim \sum_{Q\in\mathcal{W}}\int_{\R^n}\int_{\R^n} \chi_{\kappa Q}(z)\chi_Q(w) \big[\mathcal{M}_\sigma g\big(z,w)\big]^t\,dz\,dw\\
&\lesssim \int_{\R^n}\int_{\R^n} \big[\mathcal{M}_\sigma g\big(z,w)\big]^t\,dz\,dw\\
&\lesssim \int_{\R^n}\int_{\R^n} \big[I_\sigma |g|\big(z,w)\big]^t\,dz\,dw.
\end{align*}
Since $1<s=p/r<t=q/r<\infty$ and
\[
\frac{r}{p}-\frac{r}{q} = \frac{\beta-1}{t}=   \frac{\sigma}{2n},
\]
we obtain $0<1/s-1/t=\sigma/2n<1$. Hence, Theorem \ref{hedberg} yields
the right hand side of inequality \eqref{beta_ineq}.
\end{proof}

\section{A proof of Theorem \ref{fractional_hardy_plump}}

We prove a fractional Hardy-type inequality
in a domain $\Omega$ whose complement
is $\eta$-plump.

\begin{proof}[Proof of Theorem \ref{fractional_hardy_plump}]
%
By \cite[Theorem 3.52]{V} and inequalities \eqref{dist_est} we see that for every $Q\in\mathcal{W}$ there is a closed cube
 $Q^s$ in $\R^n$ such that
 \[Q^s\subset \R^n\setminus \overline\Omega,\quad 
\diam(Q)=\diam(Q^s),\quad \dist(Q,Q^s)\le 15\eta \diam(Q).\]
We write $Q^*:=\kappa Q$ for the dilated cube of $Q$ having the same centre as $Q$ and side length $\kappa\ell(Q)$,
$\kappa=40\eta \sqrt n$. The triangle inequality implies that
$Q^s\subset Q^*$.
Let 
\begin{equation}\label{a_def}
\alpha=s +n/q-n/p>0.
\end{equation}
Suppose that $u\in W^{s,p}(\R^n)$ has support in $\overline\Omega$.
By \eqref{dist_est},
\begin{equation*}
\int_{\Omega}
\frac{\vert u(x)\vert^q}
{\dist (x,\partial \Omega)^{\alpha q}} \,dx
\le
\sum _{Q\in \W}
\diam (Q)^{-\alpha q}
\int_{Q}\vert u(x)-u_{Q^s}\vert^q\,dx\,.
\end{equation*}
For a given $Q\in\mathcal{W}$ the inclusion $Q\subset Q^*$ yields
\begin{align*}
\int_{Q}\vert u(x)-u_{Q^s}\vert^q\,dx&\lesssim \int_{Q^*} |u(x)-u_{Q^*}|^q\,dx +
|Q| |u_{Q^s}-u_{Q^*}|^q
\end{align*}
Since $|Q|=|Q^s|$ and $Q^s\subset Q^*$, we obtain
\begin{align*}
|Q| |u_{Q^s}-u_{Q^*}|^q &= \int_{Q^s} |u_{Q_s}-u_{Q^*}|^q\,dx\\
&\lesssim 
\int_{Q^s} |u(x)-u_{Q^s}|^q\,dx + \int_{Q*} |u(x)-u_{Q^*}|^q\,dx.
\end{align*}
Because $0<1/p-1/q<s /n$, there is a number
 $r\in (1,p)$ such that 
 \begin{equation}\label{mu_def}
 \mu=n(1/p-1/r)+s \in (0,s )
 \end{equation}
 and
$0< 1/r-1/q<\mu/n$.
Application of Lemma \ref{hardy_cube} to the cubes $Q^*$ and $Q^s$ yields
\begin{align*}
\int_{Q} |u(x)-u_{Q^s}|^q\,dx
\lesssim|Q^*|^{1+q\mu /n-q/r} \bigg(\int_{Q^*}\int_{Q^*} 
\frac{|u(x)-u(y)|^r}{|x-y|^{n+\mu r}}\,dy\,dx\bigg)^{q/r}.
\end{align*}
Hence,
\begin{align*}
&\int_{\Omega}
\frac{\vert u(x)\vert^q}
{\dist (x,\partial \Omega)^{\alpha q}} \,dx\\
&\lesssim
\sum _{Q\in \W}
\diam (Q)^{-\alpha q}
\int_{Q}\vert u(x)-u_{Q^s}\vert^q\,dx\\
&\lesssim
\eta^{\alpha q}\sum _{Q\in \W}
|Q^*|^{1+ q(\mu /n-1/r-\alpha/n)} \bigg(\int_{Q^*}\int_{Q^*} 
\frac{|u(x)-u(y)|^r}{|x-y|^{n+\mu r}}\,dy\,dx\bigg)^{q/r}\\
&\lesssim
\eta^{\alpha q}\sum _{Q\in \W}
|Q^*|^{1+ q(\mu /n+1/r-\alpha/n)} \bigg(\vint_{Q^*}\vint_{Q^*} 
\frac{|u(x)-u(y)|^r}{|x-y|^{n+\mu r}}\,dy\,dx\bigg)^{q/r}\,.
\end{align*}
Equations \eqref{a_def} and \eqref{mu_def} imply that
\begin{equation*}
1+ q(\mu /n+1/r-\alpha/n)= 2q/p.
\end{equation*}
Hence, Lemma \ref{carleson_lemma} yields
\begin{align*}
&\int_{\Omega}
\frac{\vert u(x)\vert^q}
{\dist (x,\partial \Omega)^{\alpha q}} \,dx\\
&\lesssim
\eta^{\alpha q}\sum _{Q\in \W}
|Q^*|^{2q/p} \bigg(\vint_{Q^*}\vint_{Q^*} 
\frac{|u(x)-u(y)|^r}{|x-y|^{n+\mu r}}\,dy\,dx\bigg)^{q/r}\\
&\lesssim
\eta^{\alpha q+n} \bigg(\iint_{\R^n\times\R^n} 
\frac{\vert u(x)-u(y)\vert^p}{\vert x-y\vert ^{ps  +n}}
\,dx\,dy\bigg)^{q/p}\,.
\end{align*}
Since $\alpha q+n = 2n+q(s-n/p)$, the claim follows.
\end{proof}

%
%

\section{Applications of Theorem \ref{fractional_hardy_plump} }\label{applications}

Let us begin with certain function spaces.
The usual Besov space $B^{s}_{pp}(\R^n)$ 
coincides with the Sobolev space $W^{s,p}(\R^n)$, \cite[pp. 6--7]{T2}.
Hence, we may define
\begin{equation}\label{tilde_def}
\begin{split}
&\widetilde{B}^s_{pp}(\overline\Omega)=\big\{u\in W^{s,p}(\R^n)\,:\,\mathrm{spt}\,u\subset \overline\Omega\},\\&||u||_{\widetilde{B}^s_{pp}(\overline\Omega)}=||u||_{W^{s,p}(\R^n)}.
\end{split}
\end{equation}
The following corollary follows immediately from
Theorem \ref{fractional_hardy_plump}.

\begin{cor}\label{ineq_besov}
Suppose that $\Omega $ is a bounded domain in $\Rn$, $n\geq 2$, with 
an $\eta$-plump complement $\R^n\setminus \Omega$, $\eta\ge 1$.
Let $s  \in (0,1)$ and 
$p,q\in (1,\infty)$. If
$0<1/p-1/q<s /n$, then
\begin{align*}
&\biggl(
\int_{\Omega}\frac{\vert u(x)\vert^q}{\dist (x,\partial \Omega)^{sq+n(1-q/p)} }\,dx
\biggr)^{1/q}
\le
c_{s,n,p,q}\eta^{2n/q+s-n/p}
||u||_{\widetilde{B}^s_{pp}(\overline\Omega)}\end{align*}
for every $u\in \widetilde{B}^s_{pp}(\overline\Omega)$. 
\end{cor}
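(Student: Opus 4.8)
The plan is to read Corollary \ref{ineq_besov} off directly from Theorem \ref{fractional_hardy_plump}, since the two statements differ only cosmetically. First I would record that the weight exponents coincide: expanding the exponent of the theorem gives $q\bigl(s+n(1/q-1/p)\bigr)=qs+n-nq/p=sq+n(1-q/p)$, so the left-hand side of the corollary is literally the left-hand side appearing in Theorem \ref{fractional_hardy_plump}. The hypotheses on $\Omega$, on the plumpness parameter $\eta$, and on $s,p,q$ are word-for-word those of the theorem, so no new geometric input is needed.

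Next I would unwind the definition \eqref{tilde_def}: a function $u\in\widetilde{B}^s_{pp}(\overline\Omega)$ is exactly a function $u\in W^{s,p}(\R^n)$ with $\spt u\subset\overline\Omega$, and its norm is $\|u\|_{\widetilde{B}^s_{pp}(\overline\Omega)}=\|u\|_{W^{s,p}(\R^n)}=\|u\|_{L^p(\R^n)}+|u|_{W^{s,p}(\R^n)}$. In particular the Gagliardo seminorm that forms the right-hand side of the theorem is dominated by this full norm, namely $|u|_{W^{s,p}(\R^n)}=\bigl(\iint_{\R^n\times\R^n}|u(x)-u(y)|^p\,|x-y|^{-(sp+n)}\,dy\,dx\bigr)^{1/p}\le\|u\|_{\widetilde{B}^s_{pp}(\overline\Omega)}$.

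The corollary then follows by applying Theorem \ref{fractional_hardy_plump} to such a $u$ and bounding the seminorm on the right-hand side by the full norm, which leaves the constant $c_{s,n,p,q}\eta^{2n/q+s-n/p}$ unchanged. I expect essentially no obstacle here: the content is entirely contained in the theorem, and the only points to verify are the elementary exponent identity above, the coincidence of the class $\widetilde{B}^s_{pp}(\overline\Omega)$ with the admissible functions in the theorem via the identification $B^s_{pp}(\R^n)=W^{s,p}(\R^n)$, and the trivial domination of the seminorm by the norm.
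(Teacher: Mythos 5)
Your proposal is correct and matches the paper exactly: the paper gives no argument beyond stating that the corollary ``follows immediately'' from Theorem \ref{fractional_hardy_plump}, and your unwinding (the exponent identity $q(s+n(1/q-1/p))=sq+n(1-q/p)$, the identification of $\widetilde{B}^s_{pp}(\overline\Omega)$ with the admissible class via \eqref{tilde_def}, and the domination of the Gagliardo seminorm by the full norm) is precisely that immediate deduction. No gaps.
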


Related Hardy inequalities for a wider scale of
Triebel--Lizorkin and Besov spaces
$\widetilde{F}^s_{pq}(\overline\Omega)$ and
$\widetilde{B}^s_{pq}(\overline\Omega)$, respectively,
have been considered in \cite{T1}.  The novelty
in our result is that we only require the complement of $\Omega$ in $\R^n$
to be $\eta$-plump.

Let us study the
validity of an 
intrinsic Hardy-type inequality.
We focus on bounded Lipschitz domains and $C^\infty$ domains in $\R^n$,  \cite[p.64]{T3}.
In both cases,  the complement of  $\Omega$ in $\R^n$ is $\eta$-plump
for some $\eta\ge 1$.
The following corollary applies to all $u\in W^{s,p}(\Omega)$
but is restricted to the case $0<s<1/p$.

\begin{cor}\label{lip_app}
Let $\Omega$ be a bounded
Lipschitz domain in $\R^n$, $n\ge 2$.
Let $p,q\in (1,\infty)$ and $s\in (0,1/p)$. If
$0<1/p-1/q<s /n$, then
there is a constant $c>0$ such that
the inequality 
\begin{equation}\label{hardy_lip_1}
\begin{split}
&\biggl(
\int_{\Omega}\frac{\vert u(x)\vert^q}{\dist (x,\partial \Omega)^{s  q+n(1-q/p)} }\,dx
\biggr)^{1/q}
\\&\le
c||u||_{L^p(\Omega)}
+ c\bigg(\int_\Omega\int_\Omega \frac{|u(x)-u(y)|^p}{|x-y|^{sp+n}}\,dx\,dy\bigg)^{1/p}
=c||u||_{W^{s,p}(\Omega)}
\end{split}
\end{equation}
 holds for all $u\in W^{s,p}(\Omega)$.
\end{cor}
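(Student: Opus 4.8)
The plan is to reduce the intrinsic inequality \eqref{hardy_lip_1}, whose Gagliardo energy is computed only over $\Omega\times\Omega$, to the extended inequality of Corollary \ref{ineq_besov}, whose energy is computed over $\R^n\times\R^n$ under the support constraint $\spt u\subset\overline\Omega$. The bridge is the zero-extension: given $u\in W^{s,p}(\Omega)$, set $\tilde u=u$ on $\Omega$ and $\tilde u=0$ on $\R^n\setminus\Omega$. Then $\spt\tilde u\subset\overline\Omega$ and $\tilde u=u$ on $\Omega$, so the left-hand side of \eqref{hardy_lip_1} equals the left-hand side of Corollary \ref{ineq_besov} applied to $\tilde u$ (recall, as noted before the statement, that a bounded Lipschitz domain has $\eta$-plump complement). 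Thus it suffices to prove that zero-extension is bounded, $\|\tilde u\|_{W^{s,p}(\R^n)}\le c\|u\|_{W^{s,p}(\Omega)}$; this is exactly where the hypothesis $s<1/p$, that is $sp<1$, will be used.

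To estimate $\|\tilde u\|_{W^{s,p}(\R^n)}$, note first $\|\tilde u\|_{L^p(\R^n)}=\|u\|_{L^p(\Omega)}$. For the seminorm I would split $\R^n\times\R^n$ into $\Omega\times\Omega$, the two symmetric crossing regions $\Omega\times(\R^n\setminus\Omega)$, and $(\R^n\setminus\Omega)\times(\R^n\setminus\Omega)$. The first region contributes exactly $|u|_{W^{s,p}(\Omega)}^p$, and the last contributes $0$ since $\tilde u$ vanishes there. On a crossing region the integrand is $|u(x)|^p|x-y|^{-sp-n}$, and integrating in $y$ over $\R^n\setminus\Omega\subset\{y:|x-y|\ge\dist(x,\partial\Omega)\}$ gives, because $sp>0$,
\[
\int_{\R^n\setminus\Omega}\frac{dy}{|x-y|^{sp+n}}\le\frac{c_n}{sp}\,\dist(x,\partial\Omega)^{-sp}.
\]
Hence $\|\tilde u\|_{W^{s,p}(\R^n)}^p\lesssim\|u\|_{L^p(\Omega)}^p+|u|_{W^{s,p}(\Omega)}^p+\int_\Omega|u(x)|^p\dist(x,\partial\Omega)^{-sp}\,dx$, and the whole matter is reduced to the endpoint $(q=p)$ fractional Hardy inequality with $L^p$ remainder,
\[
\int_\Omega\frac{|u(x)|^p}{\dist(x,\partial\Omega)^{sp}}\,dx\le c\,\|u\|_{W^{s,p}(\Omega)}^p\qquad(sp<1).
\]

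This last inequality is the main obstacle: it is precisely the borderline case $q=p$ excluded from Theorem \ref{fractional_hardy_plump} (where $1/p-1/q>0$), so it cannot be quoted and must be treated separately, and it genuinely requires global information about $\Omega$, since purely local estimates are circular. I would prove it with the Whitney machinery already in place: writing $\int_\Omega\dist(\cdot,\partial\Omega)^{-sp}|u|^p\approx\sum_{Q\in\W}\diam(Q)^{-sp}\int_Q|u|^p$ via \eqref{dist_est}, and splitting $|u|^p\lesssim|u-u_Q|^p+|u_Q|^p$. The oscillation part is summable to $|u|_{W^{s,p}(\Omega)}^p$ by Lemma \ref{hardy_cube} with $q=p$ (admissible since $0\le 1/p-1/p<s/n$), each term $\diam(Q)^{-sp}\int_Q|u-u_Q|^p$ being comparable to $\int_Q\int_Q|u(x)-u(y)|^p|x-y|^{-sp-n}\,dy\,dx$ and the cubes being disjoint. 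The averages part $\sum_{Q\in\W}\diam(Q)^{-sp}|Q|\,|u_Q|^p$ I would control by chaining each $u_Q$ to the average over a fixed central cube along a Whitney chain (available because a bounded Lipschitz domain is a John domain), estimating the telescoped differences by fractional Poincar\'e on consecutive cubes; here $sp<1$ is exactly what makes the resulting geometric sums converge and what makes $\sum_{Q\in\W}\diam(Q)^{-sp}|Q|\approx\int_\Omega\dist(\cdot,\partial\Omega)^{-sp}\,dx$ finite, bounding this part by $\|u\|_{L^p(\Omega)}^p+|u|_{W^{s,p}(\Omega)}^p$. Alternatively, one may cite the known boundedness of zero-extension $W^{s,p}(\Omega)\to W^{s,p}(\R^n)$ for $sp<1$ on Lipschitz domains and skip this step.

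With the auxiliary inequality in hand, $\|\tilde u\|_{W^{s,p}(\R^n)}\le c\|u\|_{W^{s,p}(\Omega)}$, and applying Corollary \ref{ineq_besov} to $\tilde u$ (whose norm $\|\tilde u\|_{\widetilde B^s_{pp}(\overline\Omega)}$ equals $\|\tilde u\|_{W^{s,p}(\R^n)}$ by \eqref{tilde_def}) yields \eqref{hardy_lip_1}, since $\tilde u=u$ on $\Omega$. The entire difficulty is concentrated in the endpoint Hardy inequality of the previous paragraph.
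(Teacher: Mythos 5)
Your argument is correct, but it takes a more hands-on route than the paper. The paper disposes of the corollary in three lines of citations: for a bounded Lipschitz domain and $0<s<1/p$ one has $\widetilde{B}^s_{pp}(\overline\Omega)=B^s_{pp}(\Omega)$ with equivalent norms (Triebel, [T3, p.~66]) and $B^s_{pp}(\Omega)=W^{s,p}(\Omega)$ (DeVore--Sharpley, Triebel), after which Corollary \ref{ineq_besov} applies verbatim. Your proof unpacks exactly what the first identification encodes, namely the boundedness of extension by zero $W^{s,p}(\Omega)\to W^{s,p}(\R^n)$ when $sp<1$: the reduction to Corollary \ref{ineq_besov} via $\tilde u$, the splitting of the Gagliardo energy of $\tilde u$ into the interior, crossing and exterior regions, and the computation $\int_{\R^n\setminus\Omega}|x-y|^{-sp-n}\,dy\lesssim \dist(x,\partial\Omega)^{-sp}$ are all correct, and you rightly observe that everything then hinges on the endpoint inequality $\int_\Omega|u|^p\dist(x,\partial\Omega)^{-sp}\,dx\lesssim\|u\|^p_{W^{s,p}(\Omega)}$, which is genuinely outside the scope of Theorem \ref{fractional_hardy_plump} (it is the excluded case $q=p$) and is not contradicted by Dyda's counterexample because of the $L^p$ remainder on the right. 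Your treatment of the oscillation part via Lemma \ref{hardy_cube} with $q=p$ and disjointness of the Whitney cubes is complete; the averages part, however, is only a plan --- the chaining of $u_Q$ to a central cube and the $\ell^p$-summation of the telescoped chain sums is precisely where the real work (a discrete maximal-function or duality argument, plus the convergence of $\sum_Q\diam(Q)^{-sp}|Q|$ for $sp<1$ on a Lipschitz domain) would have to be carried out. Since you also offer the legitimate fallback of citing the known extension-by-zero theorem for $sp<1$ --- which is in substance the very same fact the paper imports from Triebel --- the proof is acceptable as it stands. What your approach buys is a self-contained, elementary argument consistent with the Whitney machinery already developed in the paper; what the paper's approach buys is brevity and a statement of the corollary that makes transparent why the restriction $s<1/p$ is exactly the range in which $\widetilde{B}^s_{pp}(\overline\Omega)$ and $W^{s,p}(\Omega)$ can be identified.
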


\begin{proof}
Since $B^{s}_{pp}(\R^n)=W^{s,p}(\R^n)$
the usual  Besov space $B^s_{pp}(\Omega)$ can be defined by
\begin{align*}
B^s_{pp}(\Omega)=\big\{f\in L^p(\Omega)\,:\,&f=g|_\Omega\text{ for
some } g\in W^{s,p}(\R^n)\big\},\\
||f||_{B^{s}_{pp}(\Omega)}&=\inf ||g||_{W^{s,p}(\R^n)},
\end{align*}
where the infimum is taken over all functions $g\in W^{s,p}(\R^n)$, $g|_\Omega=f$. 
In the following two identifications we assume
that $\Omega$ is a bounded
Lipschitz domain.
First,
\[\widetilde{B}^s_{pp}(\overline\Omega)=B^{s}_{pp}(\Omega)\]
with equivalent norms, \eqref{tilde_def} and \cite[p. 66]{T3}.
The spaces $B^s_{pp}(\Omega)$ 
and $W^{s,p}(\Omega)$ coincide and the
norms are equivalent, \cite[Theorem 6.7]{ds} and \cite[Theorem 1.118]{T3}.
Inequality \eqref{hardy_lip_1} is therefore a consequence of Corollary
\ref{ineq_besov}.
\end{proof}


The assumption $0<s<1/p$ can be relaxed if we
restrict the boundary behavior of functions. We state the following corollary.

\begin{cor}\label{zero}
Suppose that $\Omega$ is a bounded
$C^\infty$ domain in $\R^n$, $n\ge 2$.
Let $p,q\in (1,\infty)$ and $s \in (0,1)$, $s\not=1/p$. If
$0<1/p-1/q<s /n$, then
there is a constant $c>0$ such that
the inequality 
\begin{equation}\label{hardy_lip_2}
\begin{split}
&\biggl(
\int_{\Omega}\frac{\vert u(x)\vert^q}{\dist (x,\partial \Omega)^{s  q+n(1-q/p)} }\,dx
\biggr)^{1/q}
\\&\le
c||u||_{L^p(\Omega)}
+ c\bigg(\int_\Omega\int_\Omega \frac{|u(x)-u(y)|^p}{|x-y|^{sp+n}}\,dx\,dy\bigg)^{1/p}
=c||u||_{W^{s,p}(\Omega)}
\end{split}
\end{equation}
 holds for all
\[u\in {W^{s,p}_0}(\Omega):=\overline{C^\infty_0(\Omega)}^{_{W^{s,p}(\Omega)}}.\]
\end{cor}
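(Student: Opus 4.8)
The plan is to reduce \eqref{hardy_lip_2} to Theorem~\ref{fractional_hardy_plump} by extending functions by zero across $\partial\Omega$, and then to reach the whole space $W^{s,p}_0(\Omega)$ by density. Since $\Omega$ is a bounded $C^\infty$ domain, its complement is $\eta$-plump for some $\eta\ge 1$, so Theorem~\ref{fractional_hardy_plump} applies to every element of $\widetilde{B}^s_{pp}(\overline\Omega)$. I would first fix $u\in C^\infty_0(\Omega)$ and let $\tilde u$ be its extension by zero to $\R^n$; then $\spt\tilde u\subset\Omega\subset\overline\Omega$ and $\tilde u|_\Omega=u$, so once we know that $\tilde u\in W^{s,p}(\R^n)$, Theorem~\ref{fractional_hardy_plump} gives
\[
\biggl(\int_\Omega\frac{|u(x)|^q}{\dist(x,\partial\Omega)^{sq+n(1-q/p)}}\,dx\biggr)^{1/q}
\le c_{s,n,p,q}\,\eta^{2n/q+s-n/p}\,|\tilde u|_{W^{s,p}(\R^n)}.
\]
Thus it remains to control $|\tilde u|_{W^{s,p}(\R^n)}$ by $\|u\|_{W^{s,p}(\Omega)}$.

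This control is the heart of the matter and the step I expect to be the main obstacle. What is needed is the boundedness of the zero-extension operator $u\mapsto\tilde u$ from $W^{s,p}_0(\Omega)$ into $W^{s,p}(\R^n)$, or equivalently the norm-equivalent identification of $W^{s,p}_0(\Omega)$ with $\widetilde{B}^s_{pp}(\overline\Omega)$ realised through restriction and zero extension. Precisely here the hypotheses enter: the identification holds for a bounded $C^\infty$ domain when $s\ne 1/p$ but fails at the borderline $s=1/p$. I would invoke this fact from the theory of function spaces on $C^\infty$ domains, cf.\ \cite{T3}, together with the identity $B^s_{pp}(\R^n)=W^{s,p}(\R^n)$ already recorded in the text, to conclude $\|\tilde u\|_{W^{s,p}(\R^n)}\lesssim\|u\|_{W^{s,p}(\Omega)}$ for all $u\in C^\infty_0(\Omega)$, and hence \eqref{hardy_lip_2} for such $u$.

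It then remains to pass from $C^\infty_0(\Omega)$ to $W^{s,p}_0(\Omega)$, which is routine once the extension bound is available. Given $u\in W^{s,p}_0(\Omega)$, I would choose $u_k\in C^\infty_0(\Omega)$ with $u_k\to u$ in $W^{s,p}(\Omega)$ and, after passing to a subsequence, $u_k\to u$ almost everywhere in $\Omega$. Applying \eqref{hardy_lip_2} to each $u_k$, using Fatou's lemma on the left-hand side, and using $\|u_k\|_{W^{s,p}(\Omega)}\to\|u\|_{W^{s,p}(\Omega)}$ on the right then yields \eqref{hardy_lip_2} for $u$.
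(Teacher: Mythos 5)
Your proposal is correct and follows essentially the same route as the paper: the paper also reduces to Corollary~\ref{ineq_besov} via the identifications $W^{s,p}(\Omega)=B^s_{pp}(\Omega)$ and $\overset{\circ}{B^s_{pp}}(\Omega)=\widetilde{B}^s_{pp}(\overline\Omega)$ for $s\neq 1/p$ from Triebel, which is exactly the norm-boundedness of the zero-extension operator that you correctly single out as the crux. The only cosmetic difference is that you run an explicit density-plus-Fatou limit argument, whereas the paper absorbs that step into the equality of the completed function spaces with equivalent norms.
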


\begin{proof}
Observe that $\Omega$
is also a bounded Lipschitz domain in $\R^n$.
Hence, reasoning
as in the proof of Corollary \ref{lip_app} yields
$W^{s,p}(\Omega)=B^s_{pp}(\Omega)$ and, consequently,
\[{W^{s,p}_0}(\Omega)=
\overline{C^\infty_0(\Omega)}^{_{B^s_{pp}(\Omega)}}=
\overset{\circ}{B^s_{pp}}(\Omega).\]
Because $s\not=1/p$,
 \[\overset{\circ}{B^s_{pp}}(\Omega)= \widetilde{B}^s_{pp}(\overline\Omega);\]
we  refer to \cite[pp. 66-67]{T3}. Inequality
\eqref{hardy_lip_2} follows from these facts and
Corollary \ref{ineq_besov}.
\end{proof}

\bibliographystyle{amsalpha}

\end{document}